\theoremstyle{definition}
\newtheorem{lem}{Lemma}[section]
\newtheorem{rem}{Remark}[section]
\newcommand{\ol}{\overline}
\newcommand{\be}{\begin{equation}}
\newcommand{\ee}{\end{equation}}
\newcommand{\beas}{\begin{eqnarray*}}
\newcommand{\eeas}{\end{eqnarray*}}
\newcommand{\bea}{\begin{eqnarray}}
\newcommand{\eea}{\end{eqnarray}}
\numberwithin{equation}{section}
\begin{document}
\title[FUNDAMENTAL THEOREM OF ALGEBRA
- A NEVANLINNA THEORETIC PROOF]{FUNDAMENTAL THEOREM OF ALGEBRA
- A NEVANLINNA THEORETIC PROOF}
\date{}
\author[B. Chakraborty ]{Bikash Chakraborty }
\date{}
\address{Department of Mathematics, Ramakrishna Mission Vivekananda Centenary College, Rahara, West Bengal, India-700 118}
\email{bikashchakraborty.math@yahoo.com, bikashchakrabortyy@gmail.com}
\maketitle
\let\thefootnote\relax
\footnotetext{2010 Mathematics Subject Classification: Primary 30D35; Secondary 30D20.}
\footnotetext{Key words and phrases: Fundamental Theorem of Algebra, Nevanlinna Theory, Mathematical induction.}
\begin{abstract} There are several proofs of the Fundamental Theorem of Algebra, mainly using algebra, analysis and topology. In this article, we have shown that the Fundamental Theorem of Algebra can be proved using Nevanlinna's first fundamental theorem as well as second fundamental theorem also.
\end{abstract}
\section{Introduction}
One of the celebrated theorem in mathematics is Fundamental theorem of algebra, which tells that \emph{every non constant polynomial over $\mathbb{C}$ has a root in $\mathbb{C}$}.\par
Some mathematicians, like Peter Roth (Arithmetica Philosophica, 1608), Albert Girard (L'invention nouvelle en I'Alg\`{e}bre, 1629), and Re\'{n}e Descartes told some version of Fundamental Theorem of Algebra in the early $17^{\text{th}}$ century in their writings.\par
But first attempt at proving of this theorem was made in 1746 by ean Le Rond d�Alembert, unfortunately his proof was incomplete. Also attempts were made by Euler(1749), de Foncenex(1759), Lagrange(1772) and Laplace(1795) but Carl Friedrich Gauss is credited with producing the first correct proof in his doctoral dissertation of 1799, although in that proof also had some small gaps.\par
A rigorous proof was first produced by Argand in 1806 and the first text book containing the proof is Cours d'analyse de I'\'{E}cole Royale Polytechnique(1821) due to Cauchy.\par
Now a days, there are many proofs of the Fundamental Theorem of Algebra, mainly using algebra, analysis and topology (\cite{2}).\par
There are several analytical proofs using complex analysis, for examples, proof based on Liouville's Theorem, Rouche's Theorem, the Maximum Principle, Picard's Theorem, and the Cauchy Integral Theorem, Open mapping theorem etc.\par
The aim of this article is to produce another analytical proof of Fundamental Theorem of Algebra, using Nevanlinna theory.
\section{Basic notations of Nevanlinna Theory}
The theory of meromorphic functions was greatly developed by Rolf Nevanlinna (\cite{5}) during the 1920's. In both its scope and its power, his approach greatly surpasses previous results, and in his honor the field is now also known as \textbf{Nevanlinna  theory}.\par
Before going to describe the our new proof, we briefly state some definitions, notations, estimations like the Second Main Theorem of Nevanlinna (\cite{f,g}) which we used vividly throughout our journey.\par
Let $f(z)$ be a function that is meromorphic (i.e., regular except poles) and non-constant  in the complex plane. We denote
by $n(r,a; f)$ the number of $a$- points, with due count of multiplicity, of $f(z)$ in the $|z|<r$  for $a\in\mathbb{C}\cup\{\infty\}$, where an $a$-point is
counted according to its multiplicity. We put
\bea N(r,a;f)=\int_{0}^{r}\frac{n(t, a; f)-n(0, a; f)}{t}dt+n(0, a; f) \log r,\eea
where $n(0,a;f)$ denotes the multiplicity of $a$- points of $f(z)$ at the origin. Similarly we define $\ol{N}(r,a;f)$ where $a$-points of $f$ are counted without multiplicity. Next we define
\bea m(r,\infty;f)=m(r,f)=\frac{1}{2\pi}\int\limits_{0}^{2\pi}\log^{+}|f(re^{i\theta})|d\theta,\eea
where $ \log^{+}x := \max\{\log x,0\}$ for $x\geq0$.\par
Also the {\bf{Nevanlinna's characteristic function}} of $f$ is defined as $$T(r, f) = m(r, f) + N(r, f).$$
The basic estimate is the First Fundamental Theorem of Nevanlinna, for every complex number $a$, finite or infinite
\bea\label{fft}T(r, f)= m(r,a f) + N(r,a;f)+O(1)\eea
as $r\to\infty$. This result provides an upper bound to the number of roots of the equation $f(z)=a$ for all $a$.\par
But the difficult question of lower bounds of the number of roots of the equation $f(z)=a$ is answered by Second Fundamental Theorem of Nevanlinna.\par
Suppose that $f(z)$ is a non constant meromorphic function in whole complex plane. A quantity $\Delta$ is said to be $S(r,f)$ if $\frac{\Delta}{T(r,f)} \to 0 $ as $r \to \infty$ and $r \not\in E_{0}$ where $E_{0}$ is a set whose linear measure is not greater than $2$.\par
\section{Lemmas}
In this section, we state some results which we need in due course of time.
\begin{lem}\label{poly}(\cite{f})
For a non-constant polynomial function $P$
$$T(r,P)=deg(P)\log r + O(1),$$
as $r\to \infty$, where deg(P) is the degree of the polynomial $P$ and $O(1)$ is a bounded quantity $a$.
\end{lem}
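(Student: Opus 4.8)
The plan is to compute the Nevanlinna characteristic $T(r,P)$ for a non-constant polynomial $P(z)=a_nz^n+\dots+a_0$ with $a_n\neq 0$ and $\deg(P)=n$, by evaluating its two constituent pieces, the proximity function $m(r,P)$ and the counting function $N(r,P)$, separately. First I would observe that a polynomial is an entire function, so it has no poles in the finite plane; hence $n(t,\infty;P)=0$ for all $t$, which gives $N(r,\infty;P)=N(r,P)=0$ immediately. This disposes of the counting term and reduces the problem to showing that $m(r,P)=n\log r+O(1)$.

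For the proximity term, the key step is to understand the growth of $\log^{+}|P(re^{i\theta})|$ as $r\to\infty$. The natural approach is to factor out the dominant power: write
\be
P(z)=a_nz^n\Bigl(1+\frac{a_{n-1}}{a_nz}+\dots+\frac{a_0}{a_nz^n}\Bigr),
\ee
so that for large $|z|=r$ the bracketed factor tends to $1$ uniformly in $\theta$. Consequently $|P(re^{i\theta})|=|a_n|\,r^{n}\,|1+o(1)|$, and taking $\log^{+}$ yields $\log^{+}|P(re^{i\theta})|=n\log r+\log^{+}|a_n|+o(1)$ uniformly in $\theta$ once $r$ is large enough that the bracketed factor is bounded away from $0$. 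Integrating over $\theta\in[0,2\pi]$ and dividing by $2\pi$ then gives $m(r,P)=n\log r+O(1)$.

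Combining the two computations via $T(r,P)=m(r,P)+N(r,P)$ produces $T(r,P)=n\log r+O(1)=\deg(P)\log r+O(1)$, which is the claim. I expect the main obstacle to be the uniformity in $\theta$ needed to control the integral: one must argue that the error term is $O(1)$ uniformly across the whole circle, including the exceptional directions where $P(re^{i\theta})$ could a priori be small. The cleanest way around this is to note that for $r$ large the bracketed correction factor satisfies $|1+o(1)|\geq \tfrac12$ (say) for all $\theta$ simultaneously, so $\log^{+}|P|$ never dips below $n\log r-\log 2+\log^{+}|a_n|$, while it is also bounded above by $n\log r+O(1)$; both bounds are independent of $\theta$, so integrating preserves the $O(1)$ error. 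An alternative, slightly softer route would be to invoke the First Fundamental Theorem \eqref{fft} together with the known growth of the counting functions of $P$, but the direct estimate of $m(r,P)$ above is the most self-contained.
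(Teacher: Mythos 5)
Your proof is correct, and it is precisely the standard argument for this lemma, which the paper itself does not prove but delegates to Hayman \cite{f}: since $P$ is entire you get $N(r,P)=0$, and factoring out the dominant term $a_nz^n$ gives $m(r,P)=n\log r+O(1)$ uniformly in $\theta$ for large $r$. One cosmetic slip: in your two-sided bounds the constant should be $\log|a_n|$ rather than $\log^{+}|a_n|$ (for $|a_n|<1$ the stated lower bound $\log^{+}|P|\geq n\log r-\log 2+\log^{+}|a_n|$ fails), but the two differ by a bounded constant, so this is absorbed into the $O(1)$ and does not affect the conclusion.
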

\begin{lem}(\cite{g})\label{fft}(Nevanlinna's First Fundamental Theorem)
Let $f(z)$ be a non-constant meromorphic function defined in $|z|< R~(0<R \leq \infty)$ and let $a\in \mathbb{C} \cup\{\infty\}$ be any complex number. Then for $0<r<R$
$$T\left(r,\frac{1}{f-a}\right)=T(r,f)+O(1),$$
where $O(1)$ is a bounded quantity depending on $f$ and $a$ but not on  $r$.
\end{lem}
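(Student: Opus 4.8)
The plan is to deduce this identity from Jensen's formula combined with the near-additivity of $\log^+$. The case $a=\infty$ is immediate from the definition of $T(r,f)$, so I would fix a finite $a$ and set $g=f-a$. The first step is to apply Jensen's formula to $g$: splitting $\log=\log^+-\log^-$ on the circle $|z|=r$ and using $\log^-|g|=\log^+|1/g|$, Jensen's formula rearranges into the symmetric relation
$$T(r,g)=T\left(r,\tfrac1g\right)+\log|c_g|,$$
where $c_g$ is the leading coefficient in the Laurent expansion of $g$ at the origin; in particular $T\left(r,\tfrac{1}{f-a}\right)=T(r,f-a)+O(1)$, and it remains only to compare $T(r,f-a)$ with $T(r,f)$.

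For that comparison I would treat the two pieces of the characteristic separately. The poles of $f-a$ coincide with those of $f$, counting multiplicity, so $N(r,f-a)=N(r,f)$ exactly. For the proximity term, the elementary inequality $\log^+|u+v|\le\log^+|u|+\log^+|v|+\log2$, used with $(u,v)=(f,-a)$ and with $(u,v)=(f-a,a)$, yields $\bigl|\log^+|f-a|-\log^+|f|\bigr|\le\log^+|a|+\log2$ pointwise on $|z|=r$; integrating over $\theta$ gives $m(r,f-a)=m(r,f)+O(1)$, with the bound independent of $r$. Adding the two pieces gives $T(r,f-a)=T(r,f)+O(1)$, and chaining this with the previous step yields the assertion.

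The main obstacle is the standing hypothesis in Jensen's formula that $g$ be finite and nonzero at the origin. When $f-a$ has a zero or a pole at $z=0$, one must instead invoke the generalized Jensen formula, whose correction term is precisely the $n(0,a;g)\log r$ contribution already folded into the definition (2.1) of $N$; keeping track of this term is what guarantees the discrepancy between the two characteristics remains a genuine $O(1)$ rather than growing like $\log r$. I would either cite this generalized form directly, or reduce to the generic case by factoring out the appropriate power of $z$ and checking that both sides acquire the same $\log r$ contribution.
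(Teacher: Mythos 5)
Your proof is correct and is essentially the canonical argument: the paper itself states this lemma without proof (citing Yang--Yi), and the proof given in that source (and in Hayman) is exactly your two-step combination of Jensen's formula in the symmetric form $T(r,g)=T\left(r,\frac{1}{g}\right)+\log|c_g|$ with the near-additivity inequality $\log^{+}|u+v|\leq \log^{+}|u|+\log^{+}|v|+\log 2$, which yields $m(r,f-a)=m(r,f)+O(1)$ alongside the exact identity $N(r,f-a)=N(r,f)$. Your handling of the degenerate case at the origin via the generalized Jensen formula --- observing that the $n(0)\log r$ correction is already absorbed into the definition (2.1) of $N$, so the discrepancy stays $O(1)$ rather than growing like $\log r$ --- is precisely the right bookkeeping, and the argument has no gap.
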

\begin{lem}(\cite{g})(page no. 23)\label{1}(Second Fundamental Theorem)
Suppose that $f(z)$ is a non-constant meromorphic function in the complex plane and $a_{1},a_{2},...,a_{q}$ are $q~(\geq3)$ distinct values in $\mathbb{C}\cup\{\infty\}$ . Then
$$(q-2)T(r,f)< \sum\limits_{j=1}^{q}\ol{N}(r,a_{j};f+S(r,f)$$
where $S(r,f)$ is a quantity such that $\frac{S(r,f)}{T(r,f)} \to 0$ as $r \to+\infty$ out side of a set $E$ in $(0,\infty)$ with finite linear measure.
\end{lem}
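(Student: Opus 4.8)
The plan is to prove this along the classical Nevanlinna route, whose analytic engine is the \emph{Lemma on the Logarithmic Derivative}: for a non-constant meromorphic $f$ one has $m(r,f'/f)=S(r,f)$. Granting that, I would first reduce to the case in which all the target values $a_1,\dots,a_q$ are finite. If one of them is $\infty$, I would simply let the finite case produce a pole term $\ol{N}(r,f)=\ol{N}(r,\infty;f)$ on the right-hand side and then re-absorb $\infty$ into the list of targets; as I explain below, this is exactly what turns the coefficient $(q-1)$ into $(q-2)$. Throughout, the First Fundamental Theorem (Lemma \ref{fft}) is used in the form $m(r,a_j;f)=T(r,f)-N(r,a_j;f)+O(1)$.

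For finite distinct $a_1,\dots,a_q$, the central device is the auxiliary function
\[
F(z)=\sum_{j=1}^{q}\frac{1}{f(z)-a_j}.
\]
The first key step is the inequality $\sum_{j=1}^{q}m(r,a_j;f)\le m(r,F)+O(1)$. This rests on separation: setting $\delta=\tfrac13\min_{j\ne k}|a_j-a_k|>0$, at any point $z$ at most one factor $1/(f(z)-a_j)$ can be large, while the others stay bounded by $1/(2\delta)$. Hence on the region where $f$ is close to some $a_k$ the single term $1/(f-a_k)$ dominates $F$, giving $\log^+|F|\ge \log^+\frac{1}{|f-a_k|}-O(1)$ pointwise; integrating over $|z|=r$ yields the claimed bound.

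Next I would estimate $m(r,F)$ from above. Writing each summand as $\frac{1}{f'}\cdot\frac{f'}{f-a_j}$ gives $F=\frac{1}{f'}\sum_{j=1}^{q}\frac{f'}{f-a_j}$, whence
\[
m(r,F)\le m\!\left(r,\frac{1}{f'}\right)+\sum_{j=1}^{q}m\!\left(r,\frac{f'}{f-a_j}\right)+O(1).
\]
Each term $m(r,f'/(f-a_j))$ is $S(r,f)$ by the Logarithmic Derivative Lemma applied to $f-a_j$, so that whole sum is $S(r,f)$. Combining this with the previous step, with $m(r,1/f')=T(r,f')-N(r,1/f')+O(1)$, and with the elementary growth bound $T(r,f')\le 2T(r,f)+S(r,f)$, I would then convert every proximity function into a characteristic minus a counting function via Lemma \ref{fft} and collect terms. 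The crucial bookkeeping is that a zero of $f-a_j$ of multiplicity $m$ is a zero of $f'$ of multiplicity $m-1$, so the ramification data recaptured by $N(r,1/f')$ exactly cancels the multiplicity excess $N(r,a_j;f)-\ol{N}(r,a_j;f)$, leaving reduced counting functions $\ol{N}(r,a_j;f)$ together with the pole term $\ol{N}(r,f)$. Reckoning $\infty$ as one of the $q$ prescribed values then folds $\ol{N}(r,f)$ into $\sum_j\ol{N}(r,a_j;f)$ and sharpens the coefficient from $(q-1)$ to $(q-2)$.

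The hard part is genuinely twofold: the Logarithmic Derivative Lemma itself (which requires the Poisson–Jensen formula and a delicate estimate of $m(r,f'/f)$), and the precise tracking of multiplicities so that the ramification cancellation is exact rather than merely inequality-wise. A secondary technical point is the exceptional set $E$ of finite linear measure on which the $S(r,f)$ estimates are permitted to fail; this set originates from the Borel-type growth lemma underlying the logarithmic derivative bound and must be carried through every application. Once these are in hand, the remainder is purely algebraic rearrangement driven by the First Fundamental Theorem.
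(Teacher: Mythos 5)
The paper does not prove this lemma at all: it is quoted (with minor typos, e.g.\ the unbalanced parenthesis in $\ol{N}(r,a_{j};f$) from Yang--Yi, page 23, as a known result, so there is no in-paper argument to compare against. Your sketch correctly outlines the classical proof found in that reference --- Hayman's auxiliary function $F=\sum_{j=1}^{q}1/(f-a_j)$, the separation estimate with $\delta=\tfrac13\min_{j\neq k}|a_j-a_k|$, the logarithmic-derivative lemma with its Borel-type exceptional set, and the reduction of the $\infty$ case --- with the single caveat that the ramification cancellation via $N(r,1/f')$ need not be \emph{exact} as you claim, only the favorable inequality $N\bigl(r,1/f'\bigr)\geq\sum_{j}\bigl(N(r,a_j;f)-\ol{N}(r,a_j;f)\bigr)$, since zeros of $f'$ away from the $a_j$-points contribute extra nonnegative terms that only strengthen the bound.
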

\begin{lem}\label{Claim 1}
A polynomial of the form $$Q(z)=b_{0}z^{m}+b_{1}z^{m-1}+b_{2}z^{m-2}+...+b_{m-l}z^{l}+b_{m}$$
has atleast one zero, where $m$ and $l$ are positive integers satisfying $m>2$ , $m>l\geq 2$ and  $b_{i}\in \mathbb{C}$ for $i=1,2,3,...,m-2,m$ with $b_{0}\not=0$.
\end{lem}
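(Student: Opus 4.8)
The plan is to argue by contradiction using the Second Fundamental Theorem (Lemma \ref{1}), exploiting the gap in $Q$, which forces one finite value to be attained with high multiplicity at the origin. So I would begin by supposing that $Q$ has no zero at all. Since $Q(0)=b_{m}$, the no-zero hypothesis forces $b_{m}\neq0$, so that $0$, $b_{m}$ and $\infty$ are three \emph{distinct} points of $\mathbb{C}\cup\{\infty\}$ available for the Second Fundamental Theorem. Because $m>2$, the function $Q$ is a non-constant polynomial of degree $m$, so Lemma \ref{poly} gives $T(r,Q)=m\log r+O(1)$; in particular $T(r,Q)\to\infty$, so that the error term $S(r,Q)$ is genuinely $o(\log r)$ as $r\to\infty$ outside a set of finite linear measure.

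The key observation is the effect of the gap. Writing $Q(z)-b_{m}=z^{l}\,R(z)$ with $R(z)=b_{0}z^{m-l}+b_{1}z^{m-l-1}+\cdots+b_{m-l}$ a polynomial of degree $m-l\geq1$ (as $b_{0}\neq0$ and $m>l$), I see that $z=0$ is a root of $Q(z)-b_{m}$ of multiplicity at least $l\geq2$. Hence the equation $Q(z)=b_{m}$ has at most $1+(m-l)=m-l+1$ \emph{distinct} solutions — the origin, together with the at most $m-l$ distinct zeros of $R$ — so that $\ol{N}(r,b_{m};Q)\leq(m-l+1)\log r+O(1)$. This is the step where the multiplicity at the origin is converted into a genuine deficiency in the unramified counting function, and it is the heart of the argument.

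It then remains to feed the three values $0$, $b_{m}$, $\infty$ into Lemma \ref{1}. Under the no-zero assumption $\ol{N}(r,0;Q)=0$, and since $Q$ is a polynomial it has no poles, so $\ol{N}(r,\infty;Q)=0$. With $q=3$ the Second Fundamental Theorem yields
$$T(r,Q)< \ol{N}(r,0;Q)+\ol{N}(r,b_{m};Q)+\ol{N}(r,\infty;Q)+S(r,Q)\leq (m-l+1)\log r+O(1)+S(r,Q).$$
Substituting $T(r,Q)=m\log r+O(1)$ and rearranging gives $(l-1)\log r< O(1)+S(r,Q)$. Dividing by $\log r$ and letting $r\to\infty$ through values avoiding the exceptional set, the right-hand side tends to $0$ while the left-hand side is bounded below by $l-1\geq1$, a contradiction. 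Therefore $Q$ must have at least one zero.

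The step I expect to be the main obstacle is the bookkeeping around the exceptional set and the $S(r,Q)$ term: one must be sure that $T(r,Q)\to\infty$ so that $S(r,Q)/T(r,Q)\to0$ really forces $S(r,Q)=o(\log r)$, and that the strict inequality from Lemma \ref{1} is read along radii $r$ outside the finite-measure exceptional set before passing to the limit. The algebraic content — the multiplicity count of the value $b_{m}$ at the origin — is routine once the gap is written as $z^{l}R(z)$, and the distinctness of $0$, $b_{m}$, $\infty$ is exactly what the contradiction hypothesis $b_{m}\neq0$ secures.
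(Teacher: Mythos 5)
Your proposal is correct and takes essentially the same approach as the paper: both argue by contradiction, exploit the gap via the factorization $Q(z)-b_{m}=z^{l}R(z)$, and apply the Second Fundamental Theorem to three values (you use $0,b_{m},\infty$ for $Q$; the paper uses $0,-b_{m},\infty$ for $F=Q-b_{m}$, which is the same inequality after relabeling by the First Fundamental Theorem), deriving the same contradiction $(l-1)\log r<O(1)+S(r,\cdot)$ with $l\geq 2$. Your counting of distinct roots of $Q-b_{m}$ by $m-l+1$ matches the paper's estimate $\ol{N}(r,0;z^{l})+\ol{N}(r,0;R)\leq(1+(m-l))\log r+O(1)$, and your explicit check that $b_{m}\neq 0$ (so the three values are distinct) is a point the paper leaves implicit.
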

\begin{proof}
On contrary, we assume that the polynomial $Q(z)$ has no zero, then by definition, $\ol{N}(r,0;Q)=0$.\par
Next we define $F(z):=z^{l}R(z)$ where $R(z)=b_{0}z^{m-l}+b_{1}z^{m-l-1}+b_{2}z^{m-l-2}+...+b_{m-2}.$ Then
\bea Q(z)=F(z)+b_{m},\eea
Thus applying the Second Fundamental Theorem and Lemma \ref{poly}, we have
\beas T(r,F) &<& \ol{N}(r,\infty;F)+\ol{N}(r,0;F)+\ol{N}(r,-b_{m};F)+S(r,F)\\
&<& \ol{N}(r,0;z^{l})+\ol{N}(r,0;R(z))+\ol{N}(r,0,Q(z))+S(r,F)\\
&\leq& \frac{1}{l}N(r,0;z^{l})+N(r,0;R(z))+S(r,F)\\
&=& \log r+(m-l)\log r +S(r,F)\\
&=& \left(\frac{m-l+1}{m}\right)T(r,F)+S(r,F),
\eeas
which is absurd, hence our assumption is wrong. Thus $Q(z)$ has atleast one zero. Hence the proof.
\end{proof}
\section{Proof of the Fundamental Theorem of Algebra}
Assume {$\textbf{S(n)}$} denotes the following statement :\par \emph{Any $n$-degree polynomial over $\mathbb{C}$ has atleast one zero for any $n\in \mathbb{N}$.}\medbreak
Thus it is sufficient to prove that the statement $S(n)$ is true for all $n \in \mathbb{N}$, and we have to prove that the statement $S(n)$ is true by using mathematical induction on $n$.\par
It is obvious that the statements $S(1)$ and $S(2)$ are true, and assume that $S(k)$ is true for any natural number $k\geq3$. Now we have to show that $S(k+1)$ is true .\par
For this, we consider any polynomial of degree $k+1$ over $\mathbb{C}$ as $$P(z)=a_{0}z^{k+1}+a_{1}z^{k}+a_{2}z^{k-1}+...+a_{k}z+a_{k+1}$$  with $a_{0}\not=0$.\par
Since the statement $S(k)$ is true, so that we can find a suitable complex number $h$ such that the coefficient of $z$ in $P(z+h)$ is zero. So by Lemma \ref{Claim 1},
$P(z+h)$ has atleast one zero. Consequently, $P(z)$ has atleast one zero. Hence the statement $S(k+1)$ is true.\par
Thus by mathematical induction,  any $n$-degree non-constant polynomial over $\mathbb{C}$ has atleast one zero for any $n\in \mathbb{N}$. Hence the proof.
\begin{rem} If $P(z)$ be a non-constant polynomial having no zero in  $\mathbb{C}$, then by First Fundamental Theorem, one can deduce that \bea\label{s1}  n\log r=O(1),\eea
for sufficiently large $r$ where $z:=re^{i\theta}$ and  $O(1)$ is a bounded term dependent on $P(z)$. But equation (\ref{s1})  is absurd. Thus one can also deduce Fundamental Theorem  Algebra from  First Fundamental Theorem.
\end{rem}

\end{document}